\documentclass[12pt,a4paper,twoside]{article}
\usepackage{amsmath}
\usepackage{amsthm}
\usepackage{amssymb}
\usepackage{verbatim}
\usepackage{booktabs}
\usepackage{caption}
\captionsetup{tableposition=bottom,figureposition=bottom,font=footnotesize}
\usepackage{subfig}
\usepackage{tabulary}
\usepackage{mathrsfs}
\usepackage{thmtools}
\usepackage{enumerate}
%\usepackage{siunitx}
%\sisetup{output-decimal-marker={,}}

%\usepackage{fancyhdr}
\usepackage[pdftex]{graphicx}

\hyphenation{maximin}

%\pagestyle{fancy}
%\renewcommand{\sectionmark}[1]{\markright{\thesection\ #1}}
%\fancyhf{} \fancyhead[LE,RO]{\bfseries\thepage}
%\fancyhead[LO]{\bfseries\rightmark}
%\fancyhead[RE]{\bfseries\leftmark}
%\renewcommand{\headrulewidth}{0.5pt}
%\renewcommand{\footrulewidth}{0pt}
%\addtolength{\headheight}{0.5pt}   %spazio x rule
%\fancypagestyle{plain}

\newcommand{\spn}{{\mathrm{span}}}
\newcommand{\cnv}{{\mathrm{conv}}}
\newcommand{\cne}{{\mathrm{cone}}}
\newcommand{\ri}{{\mathrm{ri}}}
\newcommand{\inte}{{\mathrm{int}}}

\newcommand{\R}{{\mathbb{R}}}
\newcommand{\N}{{\mathbb{N}}}
\newcommand{\bU}{\bar{U}}
\newcommand{\bu}{\bar{u}}
\newcommand{\baralpha}{{\bar{\alpha}}}

\title{Bounds for $\alpha-$Optimal Partitioning of a Measurable Space Based on Several Efficient Partitions}
\author{Marco Dall'Aglio \\ LUISS University \\Rome, Italy \\ \texttt{mdallaglio@luiss.it} \and
Camilla Di Luca \\
LUISS University \\Rome, Italy \\
\texttt{cdiluca@luiss.it} }

\date{October 7, 2013}

\theoremstyle{plain}
\newtheorem{teo}{Theorem}
\newtheorem{defin}{Definition}
\newtheorem{prop}{Proposition}
\newtheorem{cor}{Corollary}
\newtheorem{lem}{Lemma}
\theoremstyle{remark}
\declaretheorem[style=definition]{example}
\newtheorem{rem}{Remark}

\begin{document}

\renewcommand\thmcontinues[1]{Continued}

\maketitle

\begin{abstract}
We provide a two-sided inequality for the $\alpha-$optimal partition value of a measurable space according to $n$ nonatomic finite measures. The result extends and often improves Legut (1988) since the bounds are obtained considering several partitions that maximize the weighted sum of the partition values with varying weights, instead of a single one.
\end{abstract}

\section{Introduction}

Let $(C,\mathcal{C})$ be a measurable space, $N=\{1,2,\ldots,n\}$, $n \in \N$  and let $\{\mu_i\}_{i \in N}$ be nonatomic finite measures defined on the same $\sigma-$algebra $\mathcal{C}$. Let $\mathscr{P}$ stand for the set of all measurable partitions $(A_1,\ldots,A_n)$ of $C$ ($A_i \in \mathcal{C}$ for all $i \in N$, $\cup_{i \in N}A_i = C$, $A_i \cap A_j = \emptyset$ for all $i \neq j$). Let $\Delta_{n-1}$ denote the $(n-1)$-dimensional simplex. For this definition and the many others taken from convex analysis, we refer to \cite{hl01}.
\begin{defin}
A partition $(A^*_1,\ldots,A^*_n) \in \mathscr{P}$ is said to be $\alpha-$optimal, for $\alpha = (\alpha_1, \ldots, \alpha_n) \in \inte \Delta_{n-1}$, if
\begin{equation} \label{v_maxmin}
v^{\alpha} := \min_{i \in N}\left\{ \frac{\mu_i(A_i^*)}{\alpha_i} \right\}
= \sup \left\{ \min_{i \in N} \left\{ \frac{\mu_i(A_i)}{\alpha_i} \right\} : (A_1,\ldots,A_n) \in \mathscr{P} \right\}.
\end{equation}
\end{defin}
This problem has a consolidated interpretation in economics. $C$ is a non-homogeneous, infinitely divisible good to be distributed among $n$ agents with idiosyncratic preferences, represented by the measures. A partition $(A_1,\ldots,A_n) \in \mathscr{P}$ describes a possible division of the cake, with slice $A_i$ given to agent $i \in N$. A satisfactory compromise between the conflicting interests of the agents, each having a relative claim $\alpha_i$, $i \in N$, over the cake, is given by the $\alpha-$optimal partition. It can be shown that the proposed solution coincides with the Kalai-Smorodinski solution for bargaining problems (See Kalai and Smorodinski \cite{ks75} and Kalai \cite{k77}).  When $\{\mu_i\}_{i \in N}$ are all probability measures, i.e..\ $\mu_i(C)=1$ for all $i \in N$, the claim vector $\alpha=(1/n,\ldots,1/n)$ describes a situation of perfect parity among agents. The necessity to consider finite measures stems from game theoretic extensions of the models, such as the one given in Dall'Aglio et al.\ \cite{dbt99}.

When all the $\mu_i$ are probability measures, Dubins and Spanier \cite{ds61} showed that if $\mu_i \neq \mu_j$ for some $i,j \in N$, then $v^{\alpha} > 1$. This bound was improved, together with the definition of an upper bound by Elton et al.\ \cite{ehk86}. A further improvement for the lower bound was given by Legut \cite{l88}. 

The aim of the present work is to provide further refinements for both bounds. We consider the same geometrical setting employed by Legut \cite{l88}, i.e. the partition range, also known as Individual Pieces Set (IPS) (see Barbanel \cite{b05} for a thorough review of its properties), defined as
$$\mathcal{R} := \{ (\mu_1(A_1), \ldots, \mu_n(A_n)) : (A_1, \ldots, A_n) \in \mathscr{P} \} \subset \mathbb{R}^n_+ .$$
Let us consider some of its features. The set $\mathcal{R}$ is compact and convex (see Lyapunov \cite{l40}). The supremum in \eqref{v_maxmin} is therefore attained. Moreover 
\begin{equation}\label{vmmax}
v^{\alpha} = \max
\{r \in \R_+ : (\alpha_1 r, \alpha_2 r, \ldots, \alpha_n r) \cap \mathcal{R} \neq \emptyset \}.
\end{equation}
So, the vector $(v^{\alpha} \alpha_1,\ldots,v^{\alpha} \alpha_n)$ is the intersection between the Pareto frontier of $\mathcal{R}$ and the ray $r \alpha= \{(r \alpha_1,\ldots,r \alpha_n): r \geq 0\}$.

To find both bounds, Legut locates the solution of the maxsum problem $
\sup \left\{ \sum_{i \in N} \mu_i(A_i): (A_1,\ldots,A_n) \in \mathscr{P} \right\}$
on the partition range. Then, he finds the convex hull of this point with the corner points $e^i=(0,\ldots,\mu_i(C),\ldots,0) \in \R^n$ ($\mu_i(C)$ is placed on the $i$-th coordinate) to find a lower bound, and uses a separating hyperplane argument to find the upper bound. We keep the same framework, but consider the solutions of several maxsum problems with weighted coordinates to find better approximations. Fix $\beta=(\beta_1,\ldots,\beta_n) \in \Delta_{n-1}$ and consider
\begin{equation}
\label{maxsumbeta}
\sum_{i \in N} \beta_i \mu_i(A_i^{\beta})=\sup \left\{ \sum_{i \in N} \beta_i \mu_i(A_i): (A_1,\ldots,A_n) \in \mathscr{P} \right\}.
\end{equation}
Let $\eta$ be a non-negative  finite-valued measure with respect to which each $\mu_i$ is absolutely continuous (for instance we may consider $\eta = \sum_{i \in N}{\mu_i}$). Then, by the Radon-Nikodym theorem for each $A \in \mathcal{C},$
\begin{equation*}
\mu_i(A) = \int_{A} f_i d \eta \quad \forall \; i \in N,
\end{equation*}
where $f_i$ is the Radon-Nikodym derivative of $\mu_i$
with respect to $\eta.$

Finding a solution for \eqref{maxsumbeta} is rather straightforward:
\begin{prop}(see \cite[Theorem 2]{ds61}, \cite[Theorem 2]{b00} \cite[Proposition 4.3]{d02})
Let $\beta \in \Delta_{n-1}$ and let
$B^{\beta}=(A^{\beta}_1 , \ldots, A^{\beta}_n)$ be an $n-$partition of $C$. If 
\begin{equation}\label{ppartition}
\beta_k f_k (x) \geq \beta_h f_h (x) \quad \text{for all }h,k \in N \text{ and for all } x \in A^{\beta}_k ,
\end{equation}
then $(A_1^{\beta},\ldots,A_n^{\beta})$ is optimal for \eqref{maxsumbeta}.
\end{prop}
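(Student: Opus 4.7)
The plan is to rewrite the objective as a single integral via the Radon--Nikodym derivatives and then argue pointwise maximization of the integrand. For any partition $(A_1,\ldots,A_n)\in\mathscr{P}$, since the $A_i$ are disjoint and cover $C$,
\[
\sum_{i\in N}\beta_i\mu_i(A_i)=\sum_{i\in N}\beta_i\int_{A_i}f_i\,d\eta=\int_C g(x)\,d\eta(x),
\]
where $g(x)=\beta_i f_i(x)$ for the unique $i\in N$ with $x\in A_i$. Hence the maxsum value is governed entirely by the choice of which index $i$ claims each point $x$.

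Next I would note the trivial pointwise bound $g(x)\le \max_{k\in N}\beta_k f_k(x)$, valid for every partition, so that
\[
\sum_{i\in N}\beta_i\mu_i(A_i)\le \int_C \max_{k\in N}\beta_k f_k(x)\,d\eta(x).
\]
Now I would verify that the candidate partition $B^{\beta}$ attains this upper bound. Indeed, condition \eqref{ppartition} says precisely that whenever $x\in A_k^{\beta}$ the value $\beta_k f_k(x)$ is among the maxima of $\{\beta_h f_h(x):h\in N\}$, so the integrand associated with $B^{\beta}$ equals $\max_{k\in N}\beta_k f_k(x)$ everywhere on $C$. Integrating yields equality in the displayed bound, which together with the bound's validity for every competing partition establishes optimality of $B^{\beta}$ in \eqref{maxsumbeta}.

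The only potentially delicate point is ensuring that the partition $B^{\beta}$ is measurable and that $\max_k \beta_k f_k$ is itself measurable, since both are needed for the integrals above to make sense. Measurability of the maximum is immediate as a finite maximum of measurable functions; measurability of $B^{\beta}$ follows from the standard construction $A_k^{\beta}=\{x:\beta_k f_k(x)\ge \beta_h f_h(x)\text{ for all }h\}$, breaking ties by any fixed rule (e.g.\ assigning ties to the smallest index), which yields measurable sets and does not affect the integrals because of the equality of the integrand at tie points. With measurability in place the argument is just two lines, and no further obstacle arises.
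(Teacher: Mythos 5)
Your proof is correct. The paper itself gives no proof of this proposition --- it only cites Dubins--Spanier, Barbanel, and Dall'Aglio --- and your argument (rewriting the weighted sum as $\int_C g\,d\eta$ with $g$ the pointwise selection, bounding by $\int_C \max_k \beta_k f_k\,d\eta$, and observing that \eqref{ppartition} forces $B^{\beta}$ to attain this bound) is exactly the standard argument from those references; the only superfluous bit is the tie-breaking construction of $A_k^{\beta}$, since the partition is already given in the hypothesis.
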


\begin{defin}
Given $\beta \in \Delta_{n-1}$, an {\em efficient value vector (EVV)} with respect to $\beta$, $u^{\beta}=(u_1^{\beta}, \ldots, u_n^{\beta}),$ is defined by
$$u_i^{\beta} = \mu_{i} (A_i^{\beta}), \quad \mbox{ for each } i = 1, \ldots, n.$$
\end{defin}

The EVV $u^{\beta}$ is a point where the hyperplane
\begin{equation}
\label{supp_hyp}
\sum_{i \in N}{\beta_i x_i} = \sum_{i \in N}{\beta_i u_i}
\end{equation} touches the partition range
$\mathcal{R},$ so $u^{\beta}$ lies on the Pareto border of
$\mathcal{R}.$

\section{The main result}
As we will see later only one EVV is enough to assure a lower
bound, we give a general result for the case where several EVVs have
already been computed. We derive this approximation result
through a convex combination of these easily computable points in
$\mathcal{R},$ which lie close to $(\alpha_1 v^{\alpha},\ldots,\alpha_n v^{\alpha}).$ 

\begin{teo}\label{teo_main}
Consider $m \leq n$ linearly independent vectors
 $u^1, \ldots, u^m,$ where $u^{i}=(u_{i1},u_{i2},\ldots,u_{in})$, $i \in M:=\{1,2,\ldots,m\},$ is the EVV associated to $\beta^i$, $\beta^i=(\beta_{i1}, \ldots, \beta_{in}) \in \Delta_{n-1}$.
Let $U = (u^1, u^2, \ldots, u^m)$ be an $n \times m$ matrix and denote as $\bar{U}$ an $m \times m$ submatrix of $U$ such that
\begin{equation}
\label{detubar}
\det(\bU) \neq 0 \; .
\end{equation}
\begin{enumerate}[(i)]
\item 
\begin{equation}
\label{acone}
\alpha \in \cne(u^1,\ldots,u^m)
\end{equation} if and only if
\begin{equation} \label{det_condition}
\det(\bU)  \det(\bU_{\alpha i }) \geq 0 \qquad \mbox{for all } i \in M,
\end{equation}
where $\bU_{\alpha i }$ is the $m \times m$ matrix obtained by replacing the $i-$th column of $\bU$ with $\bar{\alpha} \in \mathbb{R}^m$, obtained from $\alpha$ by selecting the elements corresponding to the rows in $\bU$. 
Moreover,
\begin{equation}
\label{aricone}
\alpha \in \ri(\cne(u^1,\ldots,u^m))
\end{equation}
if and only if
\begin{equation} \label{det_strict}
\det(\bU)  \det(\bU_{\alpha i }) > 0 \qquad \mbox{for all } i \in M.
\end{equation}
\item For any choice of $u^1,\ldots,u^m$,
\begin{equation} \label{ubounds}
v^{\alpha} \leq \min_{i \in M} \cfrac{\sum_{j \in N} \beta_{ij} u_{ij}}{\sum_{j \in N} \beta_{ij} \alpha_j} \: .
\end{equation}
Moreover, if \eqref{det_condition} holds, then
\begin{equation} \label{lbounds}
\cfrac{1}{\sum_{i \in M} \sum_{j \in M} \baralpha_j \left[ \bU^{-1}\right]_{ij}} \leq v^{\alpha}
\end{equation}
where $\left[ \bar{U}^{-1}\right]_{ij}$ is the $ij$-th element of $\bar{U}^{-1}$.
\end{enumerate}  
\end{teo}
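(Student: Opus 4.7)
The proof has three components: the cone-membership test in part~(i), the upper bound in part~(ii), and the lower bound in part~(ii).

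For part~(i), the key observation is that condition~(\ref{det_condition}) is Cramer's rule in disguise. If $\alpha\in\cne(u^1,\ldots,u^m)$, we can write $\alpha=U\lambda$ for some $\lambda\geq 0$; restricting to the rows selected for $\bU$ gives $\bU\lambda=\bar{\alpha}$, and the invertibility of $\bU$ together with Cramer's rule yields $\lambda_i=\det(\bU_{\alpha i})/\det(\bU)$, so $\lambda_i\geq 0$ is exactly~(\ref{det_condition}). For the converse, define $\lambda_i:=\det(\bU_{\alpha i})/\det(\bU)\geq 0$; the linear independence of the $u^i$ together with the fact that $\alpha$ lies in their span ensure that this $\lambda$ also reproduces $\alpha$ in the remaining coordinates. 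The relative-interior version~(\ref{aricone})--(\ref{det_strict}) follows by the same argument with all inequalities made strict, since $\ri(\cne(u^1,\ldots,u^m))$ consists of strictly positive combinations of the linearly independent generators.

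For the upper bound, I would exploit directly the supporting-hyperplane property recorded after~(\ref{supp_hyp}): since $u^i$ lies on a hyperplane with normal $\beta^i$ supporting $\mathcal{R}$, every $x\in\mathcal{R}$ satisfies $\sum_{j\in N}\beta_{ij}x_j\leq\sum_{j\in N}\beta_{ij}u_{ij}$. Applying this to the $\alpha$-optimal vector $x=v^{\alpha}\alpha\in\mathcal{R}$ and dividing by $\sum_{j\in N}\beta_{ij}\alpha_j>0$ (strictly positive because $\alpha\in\inte\Delta_{n-1}$) yields $v^{\alpha}\leq\sum_{j\in N}\beta_{ij}u_{ij}/\sum_{j\in N}\beta_{ij}\alpha_j$ for every $i\in M$; minimizing over $M$ gives~(\ref{ubounds}).

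For the lower bound, invoke part~(i) under~(\ref{det_condition}) to obtain $\alpha=\sum_{i\in M}\lambda_i u^i$ with $\lambda=\bU^{-1}\bar{\alpha}\geq 0$, and set $s:=\sum_{i\in M}\lambda_i$. This $s$ is strictly positive, since $\alpha$ has strictly positive components while the $u^i$ have nonnegative ones. Then $\alpha/s=\sum_{i\in M}(\lambda_i/s)u^i$ is a convex combination of the EVVs, all of which lie in $\mathcal{R}$, so by convexity of $\mathcal{R}$ (Lyapunov) we get $\alpha/s\in\mathcal{R}$. The characterization~(\ref{vmmax}) then gives $v^{\alpha}\geq 1/s$, and rewriting $s=\sum_{i\in M}\sum_{j\in M}\bar{\alpha}_j[\bU^{-1}]_{ij}$ recovers~(\ref{lbounds}). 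The main obstacle I anticipate is the converse part of~(i): Cramer's formula directly solves only the $m\times m$ subsystem $\bU\lambda=\bar{\alpha}$, so verifying $U\lambda=\alpha$ in the remaining $n-m$ coordinates requires careful bookkeeping with the linear independence of the EVVs and the ambient position of $\alpha$ in $\spn(u^1,\ldots,u^m)$; everything else is a standard supporting-hyperplane and convex-hull argument.
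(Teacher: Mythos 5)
Your proof follows essentially the same route as the paper's: Cramer's rule applied to the subsystem $\bU\lambda=\baralpha$ for part (i) and for the lower bound (your $s=\sum_i\lambda_i$ is exactly the paper's $1/r^*$, and both conclude via convexity of $\mathcal{R}$ and \eqref{vmmax}), and the supporting-hyperplane property of the EVVs for the upper bound, where your direct evaluation of $\sum_j\beta_{ij}x_j\leq\sum_j\beta_{ij}u_{ij}$ at $x=v^{\alpha}\alpha$ is a slightly cleaner phrasing of the paper's geometric argument about the intersection point not being interior to $\mathcal{R}$. The obstacle you flag in the converse of (i) --- that Cramer's rule only gives $\bU\lambda=\baralpha$ and the remaining $n-m$ coordinates of $U\lambda=\alpha$ still need to be verified, which requires $\alpha\in\spn(u^1,\ldots,u^m)$ --- is genuine, but the paper's own proof dispatches it just as tersely (it asserts that the solution of the full $(n+1)\times(m+1)$ system is obtained by deleting the equations outside $\bU$, which presupposes the same spanning condition); in the case $m=n$ used in all the corollaries the issue vanishes because $\bU=U$ contains every row. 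Apart from leaving that step open, nothing in your argument diverges materially from the paper.
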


\begin{proof}
To prove $(i)$, suppose \eqref{det_condition} holds.
We show that $\cnv(u^1, \ldots, u^m) \cap r \alpha \neq \emptyset$, and therefore that \eqref{acone} holds, by verifying that the following system of linear equations in the variables $r$, $t_1,t_2,\ldots, t_m$
\begin{equation}\label{system}
\left\{
\begin{array}{l}
t_1 u_{11} + t_2 u_{21} + \ldots + t_m u_{m1} = \alpha_1 r \\
t_1 u_{12} + t_2 u_{22} + \ldots + t_m u_{m2} = \alpha_2 r \\
\vdots \\
t_1 u_{1n} + t_2 u_{2n} + \ldots + t_m u_{mn} = \alpha_n r \\
t_1 + t_2 + \ldots + t_m = 1
\end{array}
\right.
\end{equation}
has a unique solution $(r^*,t^*_1,\ldots,t^*_m)$ with $t^*_i \geq 0$, for $i \in M$. First of all, $\det(\bar{U}) \neq 0$ implies $\det(\bar{U}_{\alpha i^*}) \neq 0$ for at least an $i^* \in M$, otherwise all the EVVs would lie on the same hyperplane, contradicting the linear independence of such vectors. This fact and \eqref{det_condition} imply that the coefficient matrix has rank $m+1$ and its unique solution can be obtained by deleting the $n-m$ equations corresponding to the rows not in $\bU$.
Denote each column of $\bU$ as $\bu^i=(\bu_{i1},\ldots,\bu_{im})$, $i \in M$ and denote as  $\baralpha =(\baralpha_1,\ldots,\baralpha_m)$,  the vector obtained from $\alpha$ by selecting the same components as each $\bu^i.$ By Cramer's rule we have for each $i \in M$, 
\begin{align*}
t_i &= \frac{\det
\begin{pmatrix}
\bu_{11} & \bu_{21} & \ldots & \bu_{i-1,1} & 0 & \bu_{ i +1,1} & \ldots & \bu_{m1} & -\baralpha_1 \\
\bu_{11} & \bu_{22} & \ldots & \bu_{i-1,2} & 0 & \bu_{i +1,2} & \ldots & \bu_{m2} & -\baralpha_2 \\
\vdots & \vdots & \vdots & \vdots & \vdots & \vdots & \vdots & \vdots & \vdots \\
\bu_{1m} & \bu_{2m} & \ldots & \bu_{i-1,m} & 0 & \bu_{i +1, m} & \ldots & \bu_{mm} & -\baralpha_m \\
1 & 1 & \ldots & 1 & 1 & 1 & \ldots & 1 & 0
\end{pmatrix}
}{\det
\begin{pmatrix}
\bu_{11} & \ldots & \bu_{m1} & -\baralpha_1 \\
\vdots & \vdots & \vdots & \vdots \\
\bu_{1m} & \ldots & \bu_{mm} & -\baralpha_m \\
1 & \ldots & 1 & 0
\end{pmatrix}
} 
\end{align*}
\begin{align*}
&= \frac{ (-1)^{i + (m+1)} \det
\begin{pmatrix}
\bu_{11} & \bu_{21} & \ldots & \bu_{ i-1, 1} & \bu_{i +1, 1} & \ldots & \bu_{m1} & -\baralpha_1 \\
\bu_{12} & \bu_{22} & \ldots & \bu_{i-1, 2} & \bu_{i +1, 2} & \ldots & \bu_{m2} & -\baralpha_2 \\
\vdots & \vdots & \vdots & \vdots & \vdots & \vdots & \vdots & \vdots \\
\bu_{1m} & \bu_{2m} & \ldots & \bu_{i-1, m} & \bu_{i +1, m} & \ldots & \bu_{mm} & -\baralpha_m 
\end{pmatrix}
}{\sum_{j \in M} (-1)^{(m+1) + j} \det
\begin{pmatrix}
\bu_{11} & \ldots & \bu_{j-1, 1} & \bu_{j+1, 1} & \ldots & \bu_{m1} & -\baralpha_1 \\
\vdots & \vdots & \vdots & \vdots & \vdots & \vdots & \vdots \\
\bu_{1m} & \ldots & \bu_{j-1, m} & \bu_{j+1, m} & \ldots & \bu_{mm} & -\baralpha_m 
\end{pmatrix}
} \\
&= \cfrac{(-1)^{2m + 2} \det(\bU_{\alpha i})}{\sum_{j \in M} (-1)^{2m+2} \det(\bU_{\alpha j})} 
= \cfrac{\det(\bU_{\alpha i})}{\sum_{j \in M} \det(\bU_{\alpha j})} \geq 0
\end{align*}
since by \eqref{det_condition} either a determinant is null or it has the same sign of the other determinants. 
If \eqref{det_strict} holds, then $t_i > 0$ for every $i \in M$ and \eqref{aricone} holds.

Conversely, each row of $U$ not in $\bU$ is a linear combination of the rows in $\bU$. Therefore, each point of $\spn(u^1,\ldots,u^m)$ is identified by a vector $x \in \R^m$ whose components correspond to the rows in $\bU$, while the other components are obtained by means of the same linear combinations that yield the rows of $U$ outside $\bU$.  

Let $\bU_{-j}$ denote the $m \times (m-1)$ matrix obtained from the matrix $\bU$ without $\bu^j$, $ j \in M$.
Consider a hyperplane $H_{-j}$ in $\spn(u^1,\ldots,u^m)$ through the origin and $m-1$ EVVs
$$H_{-j}:= \det(x,\bU_{-j})=0,$$
where $j \in N$. 
If $\alpha \notin \cne(u^1,\ldots,u^m)$, 
when we separate the subspace through $H_{-j}$, for all $j \in N$
either the ray $r \alpha$ is coplanar to $H_{-j}$, i.e., 
\begin{equation}\label{coplanar}
\det(\baralpha, \bU_{-j})= 0,
\end{equation}
or $\baralpha$ and $\bu^j$ lie in the same half-space, i.e.,
\begin{equation}\label{equalities}
\det(\baralpha, \bU_{-j})  \det(\bu^j, \bU_{-j}) >0.
\end{equation} 
Moving the first column to the $j$-th position in all the matrices above, we get \eqref{det_condition}. In case \eqref{aricone} holds, only the inequalities in \eqref{equalities} are feasible and \eqref{det_strict} holds.

To prove $(ii)$, consider, for any $i \in N$, the hyperplane \eqref{supp_hyp} that intersects the ray $r \alpha$ at the point $(\bar{r}_i \alpha_1,\ldots,\bar{r}_i \alpha_n)$, with $$
\bar{r}_i= \cfrac{\sum_{j \in N} \beta_{ij} u_{ij}}{\sum_{j \in N} \beta_{ij} \alpha_j} \: .
$$
Since $\mathcal{R}$ is convex, the intersection point is not internal to $\mathcal{R}$. 
So, $\bar{r}_i  \geq
v^{\alpha} $ for $i  \in M$, and, therefore, $\min_{i \in M} \bar{r}_i \geq v^{\alpha}$.

We get the lower bound for $v^{\alpha}$ as solution in $r$ of the  system \eqref{system}.
By Cramer's rule, 

\begin{align*}
r^* &= \frac{\det
\begin{pmatrix}
\bu_{11} & \bu_{21} & \ldots & \bu_{m1} & 0 \\
\bu_{12} & \bu_{22} & \ldots & \bu_{m2} & 0 \\
\vdots & \vdots & \vdots & \vdots & \vdots \\
\bu_{1m} & \bu_{2m} & \ldots & \bu_{mm} & 0 \\
1 & 1 & \ldots & 1 & 1
\end{pmatrix}
}{\det
\begin{pmatrix}
\bu_{11} & \bu_{21} & \ldots & \bu_{m1} & -\baralpha_1 \\
\bu_{12} & \bu_{22} & \ldots & \bu_{m2} & -\baralpha_2 \\
\vdots & \vdots & \vdots & \vdots & \vdots \\
\bu_{1m} & \bu_{2m} & \ldots & \bu_{mm} & -\baralpha_m \\
1 & 1 & \ldots & 1 & 0
\end{pmatrix}
} 
= \frac{\det(\bU)}{\det
\begin{pmatrix}
0 & 1 & 1 & \ldots & 1 \\
-\baralpha_1 & \bu_{11} & \bu_{21} & \ldots & \bu_{m1} \\
- \baralpha_2 & \bu_{12} & \bu_{22} & \ldots & \bu_{m2} \\
\vdots & \vdots & \vdots & \vdots & \vdots \\
-\baralpha_m & \bu_{1m} & \bu_{2m} & \ldots & \bu_{mm}
\end{pmatrix}
} \\
&= \cfrac{\det(\bU)}{\sum_{i \in M}{\sum_{j \in M}{(-1)^{i
+j} \baralpha_j \det(\bU_{i j})}}}
= \cfrac{1}{\sum_{i \in M} \sum_{j \in M} \baralpha_j \left[ \bU^{-1}\right]_{ij}} ,
\end{align*}
where $\det(\bU_{ij})$ is the $ij$-th minor of $\bU$.
The second equality derives by suitable exchanges of rows and columns in the denominator matrix: 
In fact, swapping the first rows and columns of the matrix leaves the determinant unaltered.
The last equality derives by dividing each row $i$ of $\bU$ by $\alpha_i$, $i \in M$. 
Finally, by \eqref{vmmax} we have $r^* \leq v^{\alpha}$.
\qedhere
\end{proof}

\begin{rem}
The above result shows that whenever $\det(\bU_{\alpha i})=0$, then $t_i=0.$ Therefore, the corresponding EVV $u^i$ is irrelevant to the formulation of the lower bound and can be discarded. We will therefore keep only those EVV that satisfy \eqref{det_strict} and will denote them as the {\em supporting EVVs} for the lower bound.
\end{rem}
In the case $m=n$ we have

\begin{cor}\label{cor_mequalsn}
Suppose that there are $n$ vectors $u^1, \ldots, u^n,$ where $u^{i}$, $i \in N$, is the EVV associated to $\beta^i$, $\beta^i=(\beta_{i1}, \ldots, \beta_{in}) \in \Delta_{n-1}$. If $U=(u^1,\ldots,u^n)$, $\det(U) \neq 0$ and, for all $i \in N$ 
\begin{equation} \label{det_mequalsn}
\det(U) \cdot \det(U_{\alpha i }) \geq 0,
\end{equation}
where $U_{\alpha i }$ is the $n \times n$ matrix obtained by replacing $u^i$ with $\alpha$ in $U$, then
\begin{equation} \label{bounds_mequalsn}
\cfrac{1}{\sum_{i \in N} \sum_{j \in N} \alpha_j \left[ U^{-1}\right]_{ij}} \leq v^{\alpha} \leq \min_{i \in N} \cfrac{\sum_{j \in N} \beta_{ij} u_{ij}}{\sum_{j \in N} \beta_{ij} \alpha_j}
\end{equation}
where $\left[ U^{-1}\right]_{ij}$ is the $ij$-th element of $U^{-1}$.
\end{cor}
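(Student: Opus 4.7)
The plan is to derive this corollary as the special case $m=n$ of Theorem~\ref{teo_main}, so that essentially no new argument is required beyond checking that the notation specializes correctly.

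First I would observe that when $m=n$, the matrix $U=(u^{1},\ldots,u^{n})$ is itself $n\times n$, so the only $m\times m$ submatrix of $U$ is $U$ itself. Consequently, in the notation of Theorem~\ref{teo_main}, $\bar U = U$ and $\bar\alpha = \alpha$, because selecting ``the elements of $\alpha$ corresponding to the rows in $\bar U$'' now means selecting every component of $\alpha$. The hypothesis $\det(U)\neq 0$ is exactly the linear independence assumption on $u^{1},\ldots,u^{n}$ and coincides with condition \eqref{detubar}.

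Next I would translate the two conclusions of Theorem~\ref{teo_main}(ii) into the notation of the corollary. Condition \eqref{det_condition} becomes \eqref{det_mequalsn}, since $\bar U_{\alpha i}$ and $U_{\alpha i}$ now refer to the same matrix (replacing the $i$-th column of $U$ by $\alpha$). The upper bound \eqref{ubounds}, which holds unconditionally, reads exactly as the right-hand inequality in \eqref{bounds_mequalsn}. When \eqref{det_mequalsn} holds, the lower bound \eqref{lbounds} of Theorem~\ref{teo_main} gives
\[
\frac{1}{\sum_{i\in N}\sum_{j\in N}\alpha_{j}\,[U^{-1}]_{ij}} \leq v^{\alpha},
\]
which is the left-hand inequality in \eqref{bounds_mequalsn}.

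There is no real obstacle: the only thing to check carefully is the dimensional bookkeeping, namely that with $m=n$ we have $M=N$, $\bar U=U$, $\bar\alpha=\alpha$, and that the two sums in the lower bound range over $N$ rather than $M$. Once this matching is recorded, both inequalities in \eqref{bounds_mequalsn} follow immediately from Theorem~\ref{teo_main}(ii).
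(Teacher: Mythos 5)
Your proposal is correct and matches the paper's intent exactly: the paper states this corollary without proof precisely because it is the immediate specialization $m=n$ of Theorem~\ref{teo_main}, with $\bar U=U$ and $\bar\alpha=\alpha$. Your careful bookkeeping of the notation is all that is needed.
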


We next consider two further corollaries that provide bounds in case only one EVV is available.
The first one works with an EVV associated to an arbitrary vector $\beta \in \Delta_{n-1}.$

\begin{cor}
\label{oneEVV}
(\cite[Proposition 3.4]{dd_arxiv}) Let $\mu_1, \ldots, \mu_n$ be finite measures and let $u=(u_1,u_2,\ldots,u_n)$ be the EVV corresponding to $\beta \in \Delta_{n-1}$ such that 
\begin{equation}\label{umax}
\alpha_j^{-1} u_j = \max_{i \in N} \alpha_i^{-1} u_i.
\end{equation} 
Then,
\begin{equation} 
\cfrac{u_j}{\alpha_j + \sum_{i \neq j}\left[ \mu_i^{-1} (C) (\alpha_i u_j - \alpha_j u_i) \right]} \leq v^{\alpha} \leq \cfrac{\sum_{i \in N} \beta_i u_i}{\sum_{i \in N} \alpha_i u_i}.
\end{equation}
\end{cor}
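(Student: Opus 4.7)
The plan is to split the statement into its lower and upper halves. The upper bound will follow directly as the $m=1$ specialisation of Theorem 1(ii), while the lower bound requires supplementing the single EVV $u$ with the $n-1$ corner points $e^i=(0,\dots,\mu_i(C),\dots,0)$ (each of which lies in $\mathcal{R}$, being achieved by the partition that gives all of $C$ to agent $i$), and locating the intersection of $\cnv(\{u\}\cup\{e^i:i\neq j\})$ with the ray $r\alpha$. Condition \eqref{umax} is precisely what ensures that this intersection exists with nonnegative mixing coefficients.

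For the upper bound I would simply invoke Theorem 1(ii) with $m=1$: the supporting hyperplane \eqref{supp_hyp} at $u$, evaluated at the Pareto-optimal point $(v^{\alpha}\alpha_1,\dots,v^{\alpha}\alpha_n)\in\mathcal{R}$, yields $v^{\alpha}\sum_i\beta_i\alpha_i\leq \sum_i\beta_i u_i$, which is the displayed inequality up to rearrangement.

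For the lower bound the geometric picture is to parametrise points on the ray as
\begin{equation*}
r\alpha \;=\; t_0\, u + \sum_{i\neq j} t_i\, e^i, \qquad t_0 + \sum_{i\neq j} t_i = 1,
\end{equation*}
and solve coordinate by coordinate. Equation $j$ pins down $t_0=r\alpha_j/u_j$ (with $u_j>0$ in the nontrivial case), and for each $i\neq j$ equation $i$ gives
\begin{equation*}
t_i \;=\; \frac{r(\alpha_i u_j - \alpha_j u_i)}{u_j\,\mu_i(C)}.
\end{equation*}
Here \eqref{umax} is used exactly to guarantee $\alpha_i u_j-\alpha_j u_i\geq 0$, hence $t_i\geq 0$. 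Substituting these expressions into the normalisation $t_0+\sum_{i\neq j}t_i=1$ produces a single linear equation in $r$ whose solution coincides with the lower bound in the statement; since $\cnv(\{u\}\cup\{e^i:i\neq j\})\subseteq\mathcal{R}$ by Lyapunov's convexity theorem, the corresponding point $r\alpha$ is feasible in \eqref{vmmax}, and hence $r\leq v^{\alpha}$.

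The computations are elementary linear algebra once the geometric construction is in place; the genuine content lies in choosing \emph{which} corner points to adjoin to $u$ and \emph{which} index $j$ to privilege. Any other choice of $j$ would make at least one of the coefficients $t_i$ negative, so the resulting combination would fall outside $\cnv$ and yield no valid lower bound. This is the structural role of condition \eqref{umax}, and identifying it is the only non-routine step in the argument.
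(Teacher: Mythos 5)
Your proposal is correct and is essentially the paper's own argument: the paper likewise adjoins the corner points $e^i$, $i\neq j$, to the single EVV $u$, forms the matrix $U=(e^1,\ldots,e^{j-1},u,e^{j+1},\ldots,e^n)$, and checks via \eqref{umax} that the determinant conditions of Corollary \ref{cor_mequalsn} hold; your coordinate-by-coordinate solution of $r\alpha=t_0u+\sum_{i\neq j}t_ie^i$ is just Cramer's rule unwound for this sparse matrix, in place of the paper's explicit computation of $U^{-1}$. One remark: your upper-bound derivation yields $v^{\alpha}\leq\bigl(\sum_i\beta_iu_i\bigr)/\bigl(\sum_i\beta_i\alpha_i\bigr)$, which is what Theorem \ref{teo_main}(ii) actually gives and is consistent with the Legut specialization (Corollary \ref{legut_result}); the denominator $\sum_i\alpha_iu_i$ printed in the corollary's statement appears to be a typo (it would force $v^{\alpha}\leq 1$ when $\alpha=\beta=(1/n,\ldots,1/n)$), so your "up to rearrangement" claim matches the intended, not the literal, inequality.
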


\begin{proof}
Consider the corner points of the partition range
$$
e^i = (0, \ldots, \mu_i(C), \ldots, 0) \in \R^n \; ,
$$
where $\mu_i(C)$ is placed on the $i$-th coordinate ($i \in N$), and the matrix
$U=(e^1, \ldots, e^{j-1}, u, e^{j+1}, \ldots, e^n)$, where $u$ occupies the $j$-th position.
Now
\begin{gather*}
det(U)=u_j \prod _{i \in N \setminus\{j\}} {\mu_i (C)}>0
\\
det(U_{\alpha j})=\alpha_j \prod _{i \in N \setminus\{j\}} {\mu_i (C)} >0
\end{gather*}
and, for all $i \in N \setminus \{j\}$,
$$
det(U_{\alpha i})= (\alpha_i u_j - \alpha_j u_i) \prod_{k \in N \setminus \{i, j\}}{\mu_k}(C) \; ,
$$ 
which is positive by \eqref{umax}. Therefore, $U$ satisfies the hypotheses of Corollary
\ref{cor_mequalsn}. Since $U$ has inverse
$$
U^{-1}=
\begin{pmatrix}
\frac{1}{\mu_1(C)} & 0 & \cdots & -\frac{u_1}{\mu_1(C) u_j} & \cdots & 0
\\
0 &\frac{1}{\mu_1(C)}  & \cdots & -\frac{u_2}{\mu_2(C) u_j} & \cdots & 0
\\
\vdots & \vdots & \ddots & \vdots & \ddots & \vdots
\\
0 & 0 & \cdots & \frac{1}{ u_j} & \cdots & 0
\\
\vdots & \vdots & \ddots & \vdots & \ddots & \vdots
\\
0 & 0 & \cdots & -\frac{u_n}{\mu_n(C) u_j} & \cdots & \frac{1}{\mu_n(C)}
\end{pmatrix} \; ,
$$
the following  lower bound is guaranteed for $v^{\alpha}$:
$$
v^{\alpha} \geq r^* = \cfrac{u_j}{ \alpha_j + \sum_{i \in N \setminus \{j\} } \left[ \mu_i^{-1} (C) (\alpha_i u_j - \alpha_j u_i) \right] } .
$$
The upper bound is a direct consequence of Theorem 
\ref{teo_main}.
\qedhere
\end{proof}
In case all measures $\mu_i$, $i \in N,$ are normalized to one and the only EVV considered is the one corresponding to $\beta=(1/n, \ldots, 1/n)$, we obtain Legut's result.

\begin{cor}\label{legut_result}
(\cite[Theorem 3]{l88}) Let $\mu_1, \ldots, \mu_n$ be probability measures and let $u=(u_1,u_2,\ldots,u_n)$ be the EVV corresponding to $\beta=(1/n, \ldots, 1/n)$. Let $j \in N$ be such that \eqref{umax} holds.
Then,
\begin{equation} \label{legut_bounds}
\cfrac{u_j}{u_j - \alpha_j(K-1)} \leq v^{\alpha} \leq \sum_{i \in N}{u_i},
\end{equation}
where $K = \sum_{i \in N} u_i$.  
\end{cor}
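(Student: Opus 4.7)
The plan is a direct specialization of Corollary \ref{oneEVV} to the setting $\mu_i(C)=1$ and $\beta_i = 1/n$ for all $i \in N$. The index $j$ chosen to satisfy \eqref{umax} is exactly what Corollary \ref{oneEVV} assumes, so the hypotheses transfer at once, and what remains is algebraic simplification of the two bounds.

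For the lower bound, I would substitute $\mu_i^{-1}(C) = 1$ into the denominator appearing in Corollary \ref{oneEVV}, which reduces it to
$$\alpha_j + \sum_{i \neq j} (\alpha_i u_j - \alpha_j u_i).$$
Using $\sum_{i \neq j} \alpha_i = 1 - \alpha_j$ (because $\alpha \in \Delta_{n-1}$) and $\sum_{i \neq j} u_i = K - u_j$, this becomes $\alpha_j + u_j(1 - \alpha_j) - \alpha_j(K - u_j)$. The two $\alpha_j u_j$ contributions cancel, leaving $u_j - \alpha_j(K-1)$, which is the denominator of the left-hand bound in \eqref{legut_bounds}.

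For the upper bound, it is cleaner to go back to the expression $\sum_j \beta_j u_j / \sum_j \beta_j \alpha_j$ supplied by Theorem \ref{teo_main} rather than unwinding the form given in Corollary \ref{oneEVV}. With $\beta_j = 1/n$ the common factor $1/n$ cancels from numerator and denominator, and since $\alpha \in \Delta_{n-1}$ forces $\sum_j \alpha_j = 1$, the ratio collapses to $\sum_j u_j = K$. There is no real obstacle in this proof; it is plug-and-chug. The only thing worth flagging is that the two normalizations play complementary roles: $\mu_i(C)=1$ is what simplifies the lower bound by killing the $\mu_i^{-1}(C)$ factors, while $\sum_j \alpha_j = 1$ together with the uniformity of $\beta$ is what collapses the upper bound, so the argument highlights exactly why Legut's clean form is tied to the normalized probability setting.
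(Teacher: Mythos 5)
Your proposal is correct and follows exactly the paper's own route: specialize Corollary \ref{oneEVV} with $\mu_i(C)=1$ and $\beta=(1/n,\ldots,1/n)$, simplify the denominator to $u_j-\alpha_j(K-1)$, and obtain the upper bound from Theorem \ref{teo_main} where the common factor $1/n$ cancels against $\sum_i\alpha_i=1$. The algebraic cancellation you carry out matches the paper's computation step for step.
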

\begin{proof}
Simply apply Corollary \ref{oneEVV} with $\mu_i(C)=1$, for all $i \in N$ and $\beta=(1/n, \ldots, 1/n)$. Then

\begin{align*}
v^{\alpha} \geq r^* &= \cfrac{u_j}{\alpha_j + \sum_{i \neq j}(\alpha_i u_j - \alpha_j u_i)}
= \cfrac{u_j}{u_j - \alpha_j(K-1)},
\end{align*}
where $K= \sum_{i \in N}{u_i}$.
Finally, by Theorem \ref{teo_main} we have 
$$v^{\alpha} \leq  \cfrac{\sum_{i \in N} \left(\frac{1}{n} \, u_i \right)}{\frac{1}{n}\sum_{i \in N} \alpha_i} = \sum_{i \in N} u_i . $$ 
\qedhere
\end{proof}

It is important to notice that the lower bound provided by Theorem \ref{teo_main} certainly improves on Legut's lower bound only when one of the EVVs forming the matrix $U$ is the one associated to $\beta=(1/n, \ldots, 1/n).$

\begin{example}[label=exa:cont]
We consider a $[0,1]$ good that has to be divided among three agents with equal claims, $\alpha = (1/3, 1/3, 1/3),$ and preferences given as density functions of probability measures
$$
f_1(x)=1 \qquad f_2(x)=2x \qquad f_3(x)= 30 x (1-x)^4  \qquad x \in [0,1] \; ,
$$ 
$f_3$ being the density function of a $\mathrm{Beta}(2,5)$ distribution. The preferences of the players are not concentrated (following Definition 12.9 in Barbanel \cite{b05}) and therefore there is only one EVV associated to each $\beta \in \Delta_{2}$ (cfr.\ \cite{b05}, Theorem 12.12)

\begin{figure}[h!]
  \caption{The density functions in Example \ref{exa:cont}. Agent 1: tiny dashing; Agent 2: large dashing; Agent 3: continuous line.}
  \centering
    \includegraphics[width=0.6\textwidth]{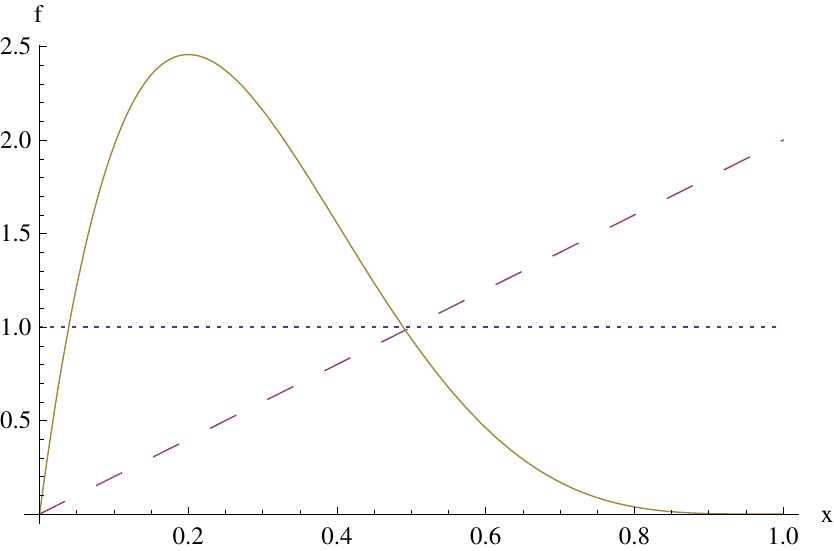}
\end{figure} 
The EVV corresponding to $\beta_{eq}=(1/3,1/3,1/3)$ is $u_{eq}=(0.0501, 0.75, 0.8594)$. Consequently, the bounds provided by Legut are
$$
1.3437 \leq v^{\alpha} \leq 1.6594.
$$
Consider now two other EVVs
$$
u^1= (0.2881, 0.5556, 0.7761)
\qquad
u^2 = (0.25, 0.9375, 0)
$$
corresponding to $\beta^1=(0.4, 0.3, 0.3)$ and $\beta^2=(0.3, 0.6, 0.1)$, respectively. The matrix $U=(u_1,u_2,u_{eq})$ satisfies the hypotheses of Theorem \ref{teo_main} and the improved bounds are
$$
1.4656 \leq v^{\alpha} \leq 1.5443.
$$
\end{example}

\section{Improving the bounds}
The bounds for $v^{\alpha}$ depend on the choice of the EVVs that satisfy the hypotheses of Theorem \ref{teo_main}. Any new EVV yields a new term in the upper bound. Since we consider the minimum of these terms, this addition is never harmful. Improving the lower bound is a more delicate task, since we should modify the set of supporting EVVs for the lower bound, i.e.\ those EVVs that include the ray $r \alpha$ in their convex hull. When we examine a new EVV we should verify whether replacing an EVV in the old set will bring to an improvement.

A brute force method would require us to verify whether conditions \eqref{det_condition} are verified with the new EVV
in place of $\alpha.$ Only in this case we have a guarantee that the new EVV will not make the bound worst. Then, we should verify \eqref{det_condition} again, with the new EVV
replacing one of the $m$ EVVs, in order to find the EVV from the old set to replace.
However, in the following proposition we propose
a more efficient condition for improving the bounds, 
by which we simultaneously verify that the new EVV belongs to the convex hull of the $m$ EVVs
and detect the vector to replace.

For any couple $j,k \in M$ denote as $\bU_{*j-k}$ the $m \times (m-1)$ matrix obtained from $\bU$ by replacing column $j$ with $u^*$ and by deleting column $k$.
\begin{teo}
\label{teo_imp}
Let $u^*,u^1,u^2,\ldots,u^m$ be $m + 1$ EVVs with the last $m$ vectors satisfying conditions \eqref{detubar} and \eqref{det_strict}. If there exists  $j \in M$ such that
\begin{equation}
\label{eq_detstar}
\det(\bar{\alpha},\bU_{*j-k}) \det(\bu^j ,\bU_{*j-k}) \leq 0 \qquad \mbox{for every }k \in M \setminus \{j\}
\end{equation}
then
\begin{gather}
\label{uscone}
u^* \in \cne(u^1,\ldots,u^m)
\\
\label{aconeminj}
\alpha \in \cne(\{u^i\}_{i \neq j}, u^*)
\end{gather}
Moreover, if all the inequalities in \eqref{eq_detstar} are strict then in both \eqref{uscone} and \eqref{aconeminj} the vectors belong to the relative interior of the respective cones.
\end{teo}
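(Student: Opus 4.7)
The plan is to express both $u^*$ and $\alpha$ in the basis $(u^1,\ldots,u^m)$, translate the determinantal hypothesis \eqref{eq_detstar} into a sign condition on these coordinates, and read the two conclusions directly from the definition of a conic hull. Write
\[
u^* = \sum_{\ell \in M} s_\ell u^\ell, \qquad \alpha = \sum_{\ell \in M} t_\ell u^\ell,
\]
uniquely by linear independence. The hypothesis \eqref{det_strict} assumed for $u^1,\ldots,u^m$, combined with Cramer's rule as in the proof of Theorem~\ref{teo_main}, gives $t_\ell > 0$ for every $\ell \in M$.

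\emph{Key identity.} Expanding $\det(\bar{\alpha},\bU_{*j-k})$ by multilinearity in its first column via $\bar{\alpha} = \sum_\ell t_\ell \bu^\ell$ kills every term with $\ell \notin \{j,k\}$, since then $\bu^\ell$ already appears as a column of $\bU_{*j-k}$. Evaluating the two surviving determinants $\det(\bu^j,\bU_{*j-k})$ and $\det(\bu^k,\bU_{*j-k})$ by expanding their internal $u^*$-column along the basis $u^1,\ldots,u^m$ and permuting columns back to the canonical order of $\bU$, one obtains, with a common sign $\sigma_k \in \{\pm 1\}$ determined by the permutation,
\[
\det(\bu^j,\bU_{*j-k}) = \sigma_k s_k \det(\bU), \qquad \det(\bar{\alpha},\bU_{*j-k}) = \sigma_k (t_j s_k - t_k s_j)\det(\bU),
\]
so \eqref{eq_detstar} collapses to $(t_j s_k - t_k s_j) s_k \leq 0$ for every $k \in M \setminus \{j\}$.

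I then show $s_j > 0$ and $s_k \ge 0$ for every $k \ne j$. If $s_j < 0$, the rewritten inequality forces $s_k \le 0$ for all $k \ne j$; then $u^* = \sum s_\ell u^\ell$ is a non-positive combination of the non-negative EVVs $u^\ell$, hence $u^* \le 0$ componentwise, and combined with $u^* \ge 0$ gives $u^* = 0$, contradicting the linear independence of $u^1,\ldots,u^m$ since $s_j \ne 0$. The case $s_j = 0$ forces every $s_k = 0$ and $u^* = 0$, a degeneracy that we exclude by taking $u^*$ to be a non-trivial EVV. Given $s_j > 0$ and $t_j, t_k > 0$, the rewritten inequality excludes $s_k < 0$ as well, so $s_k \ge 0$ for every $k \ne j$: this is \eqref{uscone}. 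For \eqref{aconeminj}, solving $u^* = s_j u^j + \sum_{i \ne j} s_i u^i$ for $u^j$ and substituting into the expansion of $\alpha$ yields
\[
\alpha = \frac{t_j}{s_j}\, u^* + \sum_{i \ne j} \frac{t_i s_j - t_j s_i}{s_j}\, u^i,
\]
with coefficient $t_j/s_j > 0$ for $u^*$ and, for each $i \ne j$, coefficient equal to $t_i > 0$ when $s_i = 0$ and $\ge 0$ when $s_i > 0$ by the rewritten hypothesis. This is \eqref{aconeminj}.

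The strict case follows at once: when every inequality in \eqref{eq_detstar} is strict, $s_k = 0$ is excluded, hence $s_k > 0$ for $k \ne j$ and every coefficient in the two representations above is strictly positive, placing $u^*$ and $\alpha$ in the relative interiors of the respective cones. The main obstacle is the Key identity: the column permutations needed to realign $\bU_{*j-k}$ with $\bU$ must be shown to produce the same sign $\sigma_k$ in both the $\bu^j$-determinant and the $\bar{\alpha}$-determinant, so that $\sigma_k^2=1$ squares out of \eqref{eq_detstar} and leaves a clean inequality.
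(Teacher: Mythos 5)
Your proof is correct, but it takes a genuinely different route from the paper's. The paper proves \eqref{uscone} by contradiction with a separating-hyperplane argument (projecting $u^*$ onto the hyperplane through $u^1,\ldots,u^m$ and invoking the Lemma in the Appendix to produce a supporting hyperplane $H_{*j-k}$ that would violate \eqref{eq_detstar}), and then proves \eqref{aconeminj} by a chain of determinant identities ($\det(\bU_{*j,jk})=-\det(\bU_{*k})$, etc.) that reduces the claim to part $(i)$ of Theorem \ref{teo_main}. You instead pass to coordinates $s=(s_\ell)$, $t=(t_\ell)$ of $\bu^*$ and $\bar\alpha$ in the basis $\bu^1,\ldots,\bu^m$ and collapse \eqref{eq_detstar} to the scalar inequality $(t_js_k-t_ks_j)s_k\le 0$; both conclusions then follow by elementary sign analysis, with explicit conic coefficients. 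The obstacle you flag at the end is in fact not one: expanding each of $\det(\bu^j,\bU_{*j-k})$ and $\det(\bar\alpha,\bU_{*j-k})$ by multilinearity leaves only column permutations of $\bU$ of the \emph{same} parity $\sigma_k$ (I checked: $\det(\bu^j,\bU_{*j-k})=\sigma_k s_k\det(\bU)$ and $\det(\bu^k,\bU_{*j-k})=-\sigma_k s_j\det(\bU)$, whence $\det(\bar\alpha,\bU_{*j-k})=\sigma_k(t_js_k-t_ks_j)\det(\bU)$), so $\sigma_k^2\det(\bU)^2>0$ factors out of the product. Your approach buys a self-contained argument that bypasses the Appendix lemma entirely and yields the replacement coefficients $\bigl(t_j/s_j$ and $(t_is_j-t_js_i)/s_j\bigr)$ in closed form, which is exactly what one needs to update the lower bound of Theorem \ref{teo_main}. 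Two small caveats to make explicit: the decompositions must be taken in the selected coordinates (i.e.\ for $\bu^*$ and $\bar\alpha$ against the columns of $\bU$, with the same identification of $\spn(u^1,\ldots,u^m)$ with $\R^m$ that the paper uses throughout), and the degenerate case you wave away is really $\bu^*=0$ on the selected rows rather than $u^*$ being a trivial EVV --- but in that case every determinant in \eqref{eq_detstar} vanishes and the strict version is vacuous, matching the paper's own treatment of degeneracies. Also note that $t_\ell>0$ requires, besides \eqref{det_strict}, the observation that the Cramer solution has $r^*>0$, which holds since $\alpha\in\inte\Delta_{n-1}$ and the $u^i$ are nonnegative and linearly independent.
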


\begin{proof}
Before proving the individual statements, we sketch a geometric interpretation for condition \eqref{eq_detstar}. As in Theorem \ref{teo_main} we restrict our analysis to the subspace $\spn(u^1,\ldots,u^m)$. For any $k \in M \setminus \{j\}$, the hyperplanes
$$
H_{*j-k}=\{x \in \R^m: \det(x,\bU_{*j-k})=0\}
$$
should separate $u^j$ and $\alpha$ (strictly if all the inequalities in \eqref{eq_detstar} are strict) in the subspace $\spn(u^1,\ldots,u^m)$.

To prove \eqref{uscone}, argue by contradiction and suppose $u^* \notin \cne(u^1,\ldots,u^m).$ Then, for any $j \in M,$ there must exist a $k \neq j$ such that the hyperplane $H_{*j-k}$ passes through all the EVV (including $u^*$) but $u^j$ and $u^k$, and supports $\cne(u^1,\ldots,u^m).$ Therefore, $\alpha$ and $u^j$ belong to the same strict halfspace defined by $H_{*j-k}$, contradicting \eqref{eq_detstar}.

To show the existence of such hyperplane consider the hyperplane $H_M$ in $\spn(u^1,\ldots,u^m)$ passing through $u^1,\ldots,u^m$ and denote with $u_M^*$ the intersection of $r u^*$ with such hyperplane. Restricting our attention to the points in $H_M$, the vectors $u^1,\ldots,u^m$ form a simplicial polyhedron with $u^*_M \notin \cnv(u^1,\ldots,u^m)$. There must thus exist a $k \neq j$ such that the $(m-2)$-dimensional hyperplane $H_{M-jk}$ in $H_M,$ passing through $u^*_M$ and $\{u^i\}_{i \notin \{j,k\}}$, supports $\cnv(u^1,\ldots,u^m)$ and contains $u^j$ (and $\alpha$) in one of its strict halfspaces (see Appendix.) If we now consider the hyperplane in $\spn(u^1,\ldots,u^m)$ passing through the origin and $H_{M-jk}$ we obtain the required hyperplane $H_{*j-k}.$

To prove \eqref{aconeminj} we need some preliminary results. First of all, under \eqref{eq_detstar},
\begin{equation}
\label{eq_*jnonzero}
\det(\bU_{*j}) \neq 0
\end{equation}
Otherwise, $u^*$ would be coplanar to $H_{-j}$ and any hyperplane $H_{*j-k}$, $k \neq j$, would coincide with it. In such case the separating conditions \eqref{eq_detstar} would not hold. Moreover, there must exist some other $h \neq j$ for which
\begin{equation}
\label{eq_*hnonzero}
\det(\bU_{*h}) \neq 0
\end{equation}
otherwise, $u^*$ would coincide with $u^j$, making the result trivial.

We also derive an equivalent condition for \eqref{eq_detstar}. Let $\bU_{aj,bk}$ be the $m \times m$ matrix obtained from $\bU$ by replacing vectors $u^j$ and $u^k$ by some other vectors, say $u^a$ and $u^b$, respectively. If we move the first column to the $k$-th position, \eqref{eq_detstar} becomes
$$
\det(\bU_{*j,\alpha k}) \det(\bU_{*j,jk}) \leq 0 \qquad \mbox{for every }k \neq j
$$
Switching positions $j$ and $k$ in the second matrix we get $\det(\bU_{*j,jk}) = - \det(\bU_{*k})$
and therefore
\begin{equation}
\label{eq_dsequiv}
\det(\bU_{*j,\alpha k}) \det(\bU_{*k}) \geq 0 \qquad \mbox{for every } k \neq j \: .
\end{equation}
From  \eqref{eq_*jnonzero}, \eqref{eq_*hnonzero} and from part $(i)$ of the present Theorem, we derive
$$
\det(\bU_{*j}) \det(\bU_{*h}) > 0
$$
and therefore, \eqref{eq_dsequiv} yields
\begin{multline}
\label{eq_alphaconv*}
\det(\bU_{*j,\alpha k}) \det(\bU_{*j}) =
\det(\bU_{*j,\alpha k}) \det(\bU_{*h}) \det(\bU_{*j}) \det(\bU_{*h})^{-1} \geq 0
\\
\mbox{for any }k \neq j
\end{multline}
Condition \eqref{eq_alphaconv*} and Theorem \ref{teo_main} allow us to conclude that $\alpha \in \cnv(\{u^k\}_{k \neq j},u^*)$.

Regarding the last statement of the Theorem, we have already shown \eqref{eq_*jnonzero}. Moreover, if \eqref{eq_detstar} hold with a strict inequality sign for any $k \neq j$, then $\det(\bU_{*k}) \neq 0$ for the same $k$ and $u^* \in \ri(\cne(u^1,\ldots,u^m))$. Similarly, \eqref{eq_alphaconv*} would hold with strict inequality signs and $\alpha \in \ri(\cne(\{u^k\}_{k \neq j},u^*))$.
\qedhere
\end{proof}
\begin{rem}
If \eqref{eq_detstar} holds, we get
not only that $r \alpha$ intersects the convex hull of the $m$ EVVs $\{u^i\}_{i \in M \setminus \{j\}} \cup \{u^*\}$, but also that the ray $r u^*$ intersects the convex hull of the $m$ EVVs $u^1, \ldots, u^m$. We can therefore replace $u^j$ with $u^*$ in the set of supporting EVVs for the lower bound. If the test fails for each $j \in M$, we discard $u^*$ we keep the current lower bound (with its supporting EVVs).     

In case \eqref{eq_detstar} holds with an equality sign for some $k$, conditions \eqref{eq_*jnonzero} and \eqref{eq_alphaconv*} together imply $\det(\bU_{*j,\alpha k})=0$. Therefore, we could discard $u^k$ from the set of supporting EVVs for the lower bound.
\end{rem}

\begin{example}[continues=exa:cont] We consider a list of 1'000 random vectors in $\Delta_2$ and, starting from the identity matrix, we iteratively pick each vector in the list. If this satisfies condition \eqref{eq_detstar}, then the matrix $U$ is updated. The update occurs 9 times and the resulting EVVs which generate the matrix $U$ are 
\begin{gather*}
u^1 = (0.5144, 0.5663, 0.3447) \\
u^2 =(0.4858, 0.5462, 0.4410) \\
u^3= (0.4816, 0.3910, 0.6551)
\end{gather*}
corresponding, respectively, to  
\begin{gather*}
\beta^1 = (0.4273, 0.2597, 0.3130)
\\
\beta^2 = (0.4524, 0.3357, 0.2119)
\\
\beta^3 = (0.4543, 0.3450, 0.2007)
\end{gather*}
Correspondingly, the bounds shrink to
$$
1.4792 \leq v^{\alpha} \leq 1.4898.
$$

\end{example}
The previous example shows that updating the matrix $U$ of EVVs through a random selection of the new candidates is rather inefficient, since it takes more than 100 new random vectors, on average, to find a valid replacement for vectors in $U$.

A more efficient way method picks the candidate EVVs through some accurate choice of the corresponding values of $\beta$. In \cite{dd_arxiv} a subgradient method is considered to find the value of $v^{\alpha}$ up to any specified level of precision. In that algorithm, Legut's lower bound is used, but this can be replaced by the lower bound suggested by Theorem \ref{teo_main}.

\begin{example}[continues=exa:cont]
Considering the improved subgradient algorithm, we obtain the following sharper bounds
$$
1.48768 \leq v^{\alpha} \leq 1.48775
$$
after 27 iterations of the algorithm in which, at each repetion, a new EVV is considered.
\end{example}

\subsection*{Acknowledgements}
The authors would like to thank Vincenzo Acciaro and Paola Cellini for their precious help.

\section*{Appendix}
The proof of \eqref{uscone} in Theorem \ref{teo_imp} is based on the following Lemma. This is probably known and too trivial to appear in a published version of the present work. However, we could not find an explicit reference to cite it. Therefore, we state and prove the result in this appendix
\begin{lem}
 Consider $n$ affinely independent points $u^1,\ldots,u^n$ in $\R^{n-1}$ and $u^* \notin \cnv(u^1,\ldots,u^n)$. For each $j \in N$ there must exist a $k \neq j$ such that the hyperplane passing through $u^*$ and $\{u^i\}_{i \notin \{j,k\}}$ supports $\cnv(u^1,\ldots,u^m)$ and has $u^j$ in one of its strict halfspaces.
\end{lem}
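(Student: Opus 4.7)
The plan is to work in barycentric coordinates associated to the affinely independent points $u^1,\ldots,u^n$. Every $x \in \R^{n-1}$ is uniquely expressed as $x = \sum_{i=1}^n \lambda_i u^i$ with $\sum_i \lambda_i = 1$; membership in $\cnv(u^1,\ldots,u^n)$ corresponds to $\lambda_i \geq 0$ for all $i$, and the hypothesis $u^* \notin \cnv(u^1,\ldots,u^n)$ yields coordinates $(\lambda_1^*,\ldots,\lambda_n^*)$ with $\sum_i \lambda_i^* = 1$ and at least one $\lambda_l^* < 0$.

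For a candidate pair $(j,k)$ with $k \neq j$, I would describe the hyperplane $H_{jk}$ through $\{u^i\}_{i \notin \{j,k\}}$ and $u^*$ by a linear equation $L(\lambda) = \sum_i c_i \lambda_i$ in barycentric coordinates. Vanishing at $u^i$ for $i \notin \{j,k\}$ forces $c_i = 0$ for those indices, and vanishing at $u^*$ then fixes $(c_j, c_k) = (\lambda_k^*, -\lambda_j^*)$ up to scalar, so
$$
L(\lambda) = \lambda_k^* \lambda_j - \lambda_j^* \lambda_k.
$$
Since $L(u^j) = \lambda_k^*$, $L(u^k) = -\lambda_j^*$, and $L(u^i) = 0$ for the remaining vertices, $H_{jk}$ supports $\cnv(u^1,\ldots,u^n)$ precisely when $\lambda_j^*$ and $\lambda_k^*$ carry opposite weak signs (so that $L$ is weakly single-signed on the simplex), and $u^j$ lies in a strict open halfspace of $H_{jk}$ iff $\lambda_k^* \neq 0$.

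It remains to produce such an index $k$ for every $j$, by a short case analysis on $\lambda_j^*$. If $\lambda_j^* \leq 0$, then $\sum_{i \neq j} \lambda_i^* = 1 - \lambda_j^* \geq 1 > 0$, so some $\lambda_k^* > 0$ exists with $k \neq j$, and the pair $(\lambda_j^*, \lambda_k^*)$ has the required sign pattern. If $\lambda_j^* > 0$, then any index $l$ delivered by $\lambda_l^* < 0$ must differ from $j$, and $k := l$ suffices. In both cases $\lambda_k^* \neq 0$, which also disposes of the only non-trivial bookkeeping point: the defining points of $H_{jk}$ are genuinely affinely independent, because $u^*$ has a nonzero barycentric coordinate along $u^k$ and hence cannot lie in the affine hull of $\{u^i\}_{i \neq k}$, still less in the $(n-3)$-dimensional affine hull of $\{u^i\}_{i \notin \{j,k\}}$. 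The main subtlety I expect is precisely this non-degeneracy check; once the coefficient formula is in place, the existence of a separating index $k$ is immediate from the arithmetic of the $\lambda_i^*$.
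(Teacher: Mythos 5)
Your proof is correct, and it takes a genuinely different route from the paper's. The paper argues by contradiction: assuming no valid $k$ exists for some $j$, it passes to the facet hyperplane $H_{-j}$ through $\{u^i\}_{i\neq j}$, projects $u^*$ along the line through $u^j$ to a point $u^j_*\notin\cnv(\{u^i\}_{i\neq j})$, and derives a contradiction from the incompatible separation requirements that the sub-hyperplanes $H_{-jk}$ would have to satisfy simultaneously. You instead give a direct, constructive argument in barycentric coordinates: the affine functional cutting out the candidate hyperplane is pinned down as $L(\lambda)=\lambda_k^*\lambda_j-\lambda_j^*\lambda_k$, support of the simplex reduces to a weak sign condition on $(\lambda_j^*,\lambda_k^*)$, and the required $k$ is produced by elementary arithmetic ($\sum_i\lambda_i^*=1$ forces a positive coordinate off $j$ when $\lambda_j^*\leq 0$; a negative coordinate exists by hypothesis and must differ from $j$ when $\lambda_j^*>0$). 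Your version buys an explicit identification of which $k$ works, handles the non-degeneracy of the defining point set cleanly (via $\lambda_k^*\neq 0$), and avoids the delicate "simultaneously lie in incompatible regions" step that the paper leaves rather terse; the paper's version stays closer to the geometric picture (supporting facets of a simplicial cone) that motivates Theorem \ref{teo_imp}. The only caveat, which applies equally to the paper's own argument, is the degenerate case $n=2$, where the set $\{u^i\}_{i\notin\{j,k\}}$ is empty and the "supporting hyperplane" no longer meets the convex hull; this does not affect the intended application.
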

\begin{proof}
Suppose the thesis is not true. Then, for any $h,\ell \in N$, the hyperplane passing through $u^*$ and $\{u^i\}_{i \in N \setminus\{h,\ell\}}$ will strictly separate the remaining points $u^h$ and $u^{\ell}$.

Fix now $j \in N$ and consider $H_{-j}$, the hyperplane passing through the points $\{u^i\}_{i \in N \setminus \{j\}}$. Also denote as $u^j_*$ the intersection between $H_{-j}$ and the line joining $u^*$ and $u^j$. Clearly $u^j_* \notin \cnv (\{u^i\}_{i \in N \setminus \{j\}})$. Therefore, for any $k \in N \setminus \{j\}$, the hyperplane $H_{-jk}$ in $H_{-j}$ passing through $\{u^i\}_{i \in N \setminus \{j,k\}}$ will strictly separate $u^k$ and $u^j_*$. Consequently,  $u^j_*$ should simultaneously lie in the halfspace of $H_{-jk}$ not containing $\cnv (\{u^i\}_{i \in N \setminus \{j\}})$, and in the cone formed by the other hyperplanes $H_{-jh}$, $h \in N \setminus \{j,k\}$ and not containing $\cnv (\{u^i\}_{i \in N \setminus \{j\}})$. A contradiction.

\qedhere
\end{proof}


\begin{thebibliography}{99}

\bibitem{b00} Barbanel J (2000), On the structure of Pareto optimal cake partitions,
J.\ Math.\ Econom.\ 33, No. 4, 401--424.

\bibitem{b05} Barbanel J (2005), The Geometry of Efficient Fair Division, Cambridge University Press.

\bibitem{bt96} Brams S J and Taylor A D (1996), Fair Division. From Cake-cutting to Dispute Resolution, Cambridge University Press.

\bibitem{d02} Dall'Aglio M (2001), The Dubins-Spanier Optimization Problem in Fair Division Theory, J.\ Comput.\ Appl.\ Math., 130, No. 1--2, 17--40.

\bibitem{dbt99} Dall'Aglio M, Branzei R and Tijs S H (2009), Cooperation in Dividing the Cake, TOP, 17, No.2, 417--432. 

\bibitem{dd_arxiv} Dall'Aglio M.\ and Di Luca C., Finding maxmin allocations in cooperative and competitive fair division, arXiv:1110.4241.

\bibitem{dh03} Dall'Aglio M and Hill T P (2003), Maximin Share and Minimax Envy in Fair-division Problems, J.\ Math.\ Anal.\ Appl., 281, 346--361.

\bibitem{ds61} Dubins L E and Spanier E H (1961), How to cut a cake fairly, Amer.\ Math.\ Monthly 68, No.1, 1--17.

\bibitem{ehk86} Elton J, Hill T P and Kertz R P (1986), Optimal-partitioning Inequalities for Nonatomic Probability Measures, Trans.\ Amer.\ Math.\ Soc.\, 296, No.2, 703--725.

\bibitem{hl01} Hiriart-Urruty J B and Lemar\'echal C (2001), Fundamentals of Convex Analysis, Springer

\bibitem{k77} Kalai E (1977), Proportional Solutions to Bargaining Situations: Interpersonal Utility Comparisons, Econometrica, 45, No. 77, 1623--1630.

\bibitem{ks75} Kalai E and Smorodinsky M (1975), Other Solutions to Nash's Bargaining Problem, Econometrica, 43, No. 3, 513--518.

\bibitem{l88} Legut J (1988),  Inequalities for $\alpha$-Optimal Partitioning of a Measurable Space, Proc.\ Amer.\ Math.\ Soc.\ 104, No. 4, 1249--1251.

\bibitem{l90} Legut J (1990), On Totally Balanced Games Arising from Cooperation in Fair Division, Games Econom.\ Behav., 2, No. 1, 47--60.

\bibitem{lpt94} Legut J, Potters J A M and Tijs S H (1994), Economies with Land -- A Game Theoretical Approach, Games Econom.\ Behav.\ 6, No. 3, 416--430.

\bibitem{lw88} Legut J and Wilczynski M (1988), Optimal Partitioning of a Measurable Space, Proc.\ Amer.\ Math.\ Soc. 104, No.1, 262--264.

\bibitem{l40} Lyapunov A (1940), Sur les Fonctions-vecteurs Complet\'{e}ment Additives, Bull. Acad. Sci. (URSS), No.4, 465--478.


\end{thebibliography}
\end{document}